\newtheorem{theorem}{Theorem}
\newtheorem{proposition}[theorem]{Proposition}
\newtheorem{lemma}[theorem]{Lemma}
\begin{document}

\title{Bipartite intrinsically knotted graphs with 22 edges}
\author[H. Kim]{Hyoungjun Kim}
\address{Department of Mathematics, Korea University, Anam-dong, Sungbuk-ku, Seoul 136-701, Korea}
\email{kimhjun@korea.ac.kr}
\author[T. Mattman]{Thomas Mattman}
\address{Department of Mathematics and Statistics, California State University, Chico, Chico CA 95929-0525, USA}
\email{TMattman@CSUChico.edu}
\author[S. Oh]{Seungsang Oh}
\address{Department of Mathematics, Korea University, Anam-dong, Sungbuk-ku, Seoul 136-701, Korea}
\email{seungsang@korea.ac.kr}

\thanks{2010 Mathematics Subject Classification: 57M25, 57M27, 05C10}

\begin{abstract}
A graph is intrinsically knotted if every embedding contains a knotted cycle.
It is known that intrinsically knotted graphs have at least 21 edges 
and that the KS graphs, $K_7$ and the 13 graphs obtained from $K_7$ by $\nabla Y$ moves, are 
the only minor minimal intrinsically knotted graphs with 21 edges \cite{BM, JKM, LKLO, M}.
This set includes exactly one bipartite graph, the Heawood graph.

In this paper we classify the intrinsically knotted bipartite graphs with at most 22 edges. 
Previously known examples of intrinsically knotted graphs of size 22 were 
those with KS graph minor and the 168 graphs in the $K_{3,3,1,1}$ and $E_9+e$ families. 
Among these, the only bipartite example with no Heawood subgraph is Cousin 110 of  the $E_9+e$ family. 
We show that, in fact, this is a complete listing. 
That is, there are exactly two graphs of size at most 22 that are minor minimal bipartite intrinsically knotted: 
the Heawood graph and Cousin 110.
\end{abstract}

\maketitle

\section{Introduction} \label{sec:intro}

Throughout the paper, an embedded graph will mean one embedded in $R^3$.
A graph is {\em intrinsically knotted\/} if every embedding contains a non-trivially knotted cycle.
Conway and Gordon \cite{CG} showed that $K_7$, the complete graph with seven vertices,
is an intrinsically knotted graph.
Foisy \cite{F} showed that $K_{3,3,1,1}$ is also intrinsically knotted.
A graph $H$ is a {\em minor\/} of another graph $G$
if it can be obtained by contracting edges in a subgraph of $G$.
If a graph $G$ is intrinsically knotted and has no proper minor
that is intrinsically knotted, we say $G$ is  {\em minor minimal intrinsically knotted\/}.
Robertson and Seymour \cite{RS} proved that for any property of graphs, 
there is a finite set of graphs minor minimal with respect to that property. 
In particular,  there are only finitely many minor minimal intrinsically knotted graphs,
but finding the complete set is still an open problem.
A $\nabla Y$ {\em move\/} is an exchange operation on a graph
that removes all edges of a triangle $abc$ and
then adds a new vertex $v$ and three new edges $va, vb$ and $vc$.
The reverse operation is called a $Y \nabla$ {\em move\/} as follows:

\begin{figure}[h]
\includegraphics[scale=1]{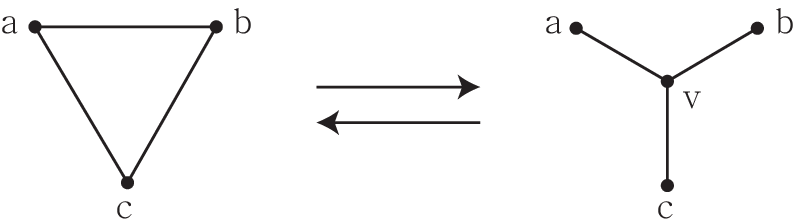}
\end{figure}

Since the $\nabla Y$ move preserves intrinsic knottedness \cite{MRS},
we will concentrate on triangle-free graphs.
We say two graphs $G$ and $G^{\prime}$ are {\em cousins} of each other
if $G^{\prime}$ is obtained from $G$ by a finite sequence of $\nabla Y$ and $Y \nabla$ moves.
The set of all cousins of $G$ is called the $G$ {\em family}.

Johnson, Kidwell and Michael \cite{JKM} and, independently, the second author \cite{M} showed 
that intrinsically knotted graphs have at least 21 edges.
Hanaki, Nikkuni, Taniyama and Yamazaki \cite{HNTY} constructed the $K_7$ family 
which consists of 20 graphs derived from $H_{12}$ and $C_{14}$ by $Y \nabla$ moves as in Figure \ref{fig11},
and they showed that the six graphs $N_9$, $N_{10}$, $N_{11}$, $N'_{10}$, $N'_{11}$ and $N'_{12}$
are not intrinsically knotted.
Goldberg, Mattman and Naimi \cite{GMN} also proved this, independently.

\begin{figure}[h]
\includegraphics[scale=1]{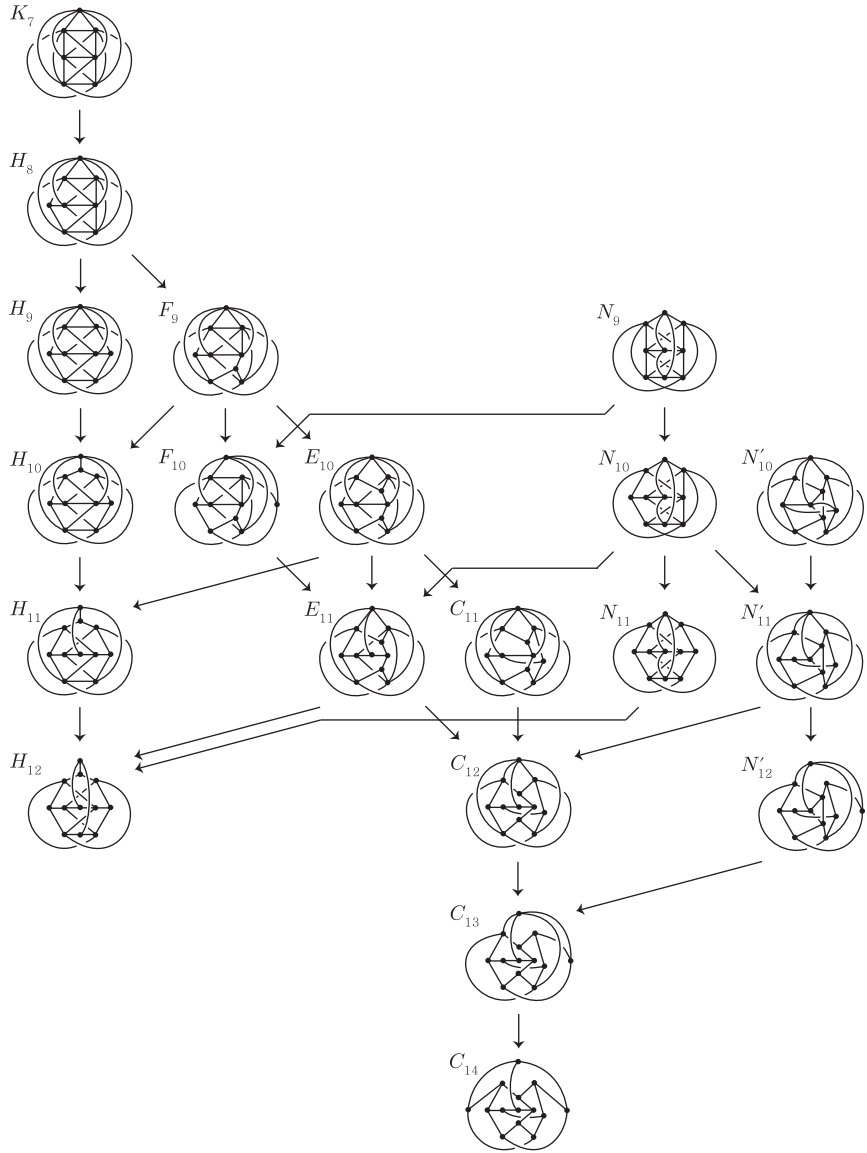}
\caption{The $K_7$ family}
\label{fig11}
\end{figure}

Recently two groups \cite{BM, LKLO}, working independently,
showed that $K_7$ and the 13 graphs obtained from $K_7$ by $\nabla Y$ moves are 
the only intrinsically knotted graphs with 21 edges. 
This gives us the complete set of 14 minor minimal intrinsically knotted graphs with 21 edges, 
which we call {\em the KS graphs} as they were first described by Kohara and Suzuki~\cite{KS}.

In this paper, we concentrate on intrinsically knotted graphs with 22 edges.
The $K_{3,3,1,1}$ family consists of 58 graphs,
of which 26 graphs were previously known to be minor minimal intrinsically knotted.
Goldberg et al.~\cite{GMN} showed that the remaining 32 graphs are also minor minimal intrinsically knotted.
The graph $E_9+e$ of Figure~\ref{fig12} is obtained from $N_9$ by adding a new edge $e$ 
and has a family of 110 graphs.
All of these graphs are intrinsically knotted 
and exactly 33 are minor minimal intrinsically knotted \cite{GMN}.
By combining the $K_{3,3,1,1}$ family and the $E_9+e$ family,
all 168 graphs were already known to be intrinsically knotted graphs with 22 edges.

\begin{figure}[h]
\includegraphics[scale=1]{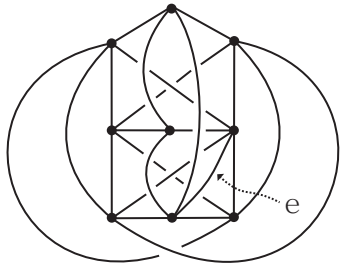}
\caption{The graph $E_9+e$}
\label{fig12}
\end{figure}

A {\em bipartite\/} graph is a graph whose vertices can be divided into two disjoint sets $A$ and $B$
such that every edge connects a vertex in $A$ to one in $B$.
Equivalently, a bipartite graph is a graph that does not contain any odd-length cycles.
Among the 14 intrinsically knotted graphs with 21 edges,
only $C_{14}$, {\em the Heawood graph}, is bipartite.

A bipartite graph formed by adding an edge to the Heawood graph will be bipartite intrinsically  knotted. 
We will show that this is the only way to form such a graph that has a KS graph minor.  
Among the remaining 168 known examples of intrinsically knotted graphs with 22 edges 
in the $K_{3,3,1,1}$ and $E_9+e$ families,
cousins 89 and 110 of the $E_9+e$ family are the only bipartite graphs. 

\begin{figure}[h]
\includegraphics[scale=1]{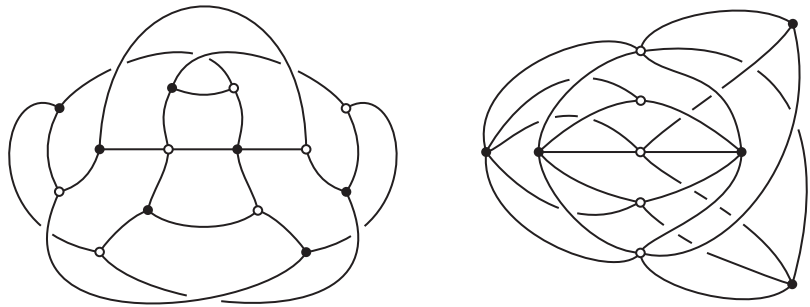}
\caption{Two cousins 89 and 110 of the $E_9+e$ family}
\label{fig13}
\end{figure}

However, Cousin 89 has the Heawood graph as a subgraph. 
Our goal in this paper is to show that Cousin 110 completes the list of minor minimal examples. 
We say that a graph $G$ is {\em minor minimal bipartite intrinsically knotted} 
if $G$ is an intrinsically knotted bipartite graph, but no proper minor of $G$ has this property. 
Since contracting edges can lead to a bipartite minor for a graph that was not bipartite to begin with, 
it's easy to construct examples of graphs that are not themselves bipartite intrinsically knotted 
even though they have a minor that is  minor minimal bipartite intrinsically knotted. 
Nonetheless, Robertson and Seymour's~\cite{RS} Graph Minor Theorem guarantees 
that there are a finite number of minor minimal bipartite intrinsically knotted graphs 
and every bipartite intrinsically knotted graph must have one as a minor. 
Our main theorem shows that there are exactly two such of 22 or fewer edges.

\begin{theorem}\label{thm:main}
There are exactly two graphs of size at most 22 that are minor minimal bipartite intrinsically knotted: 
The Heawood graph and Cousin 110 of the $E_9+e$ family.
\end{theorem}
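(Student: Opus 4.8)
The plan is to reduce the classification to a finite search over bipartite graphs with exactly $22$ edges and then resolve each case using the $2$-apex obstruction together with detection of a Heawood (or other known intrinsically knotted) minor. I would first dispose of the two claimed graphs. The Heawood graph has $21$ edges, so every proper minor has at most $20$ edges and cannot be intrinsically knotted; hence it is trivially minor minimal bipartite intrinsically knotted. Cousin $110$ lies in the $E_9+e$ family, so it is intrinsically knotted, and it is bipartite with no Heawood subgraph. To see it is minor minimal I would check that no proper minor is bipartite intrinsically knotted: such a minor would have at most $21$ edges, hence if intrinsically knotted it would be one of the $14$ KS graphs, and the only bipartite KS graph is the Heawood graph. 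Deleting an edge cannot produce the Heawood graph, as that would exhibit a Heawood subgraph contrary to hypothesis, and the finitely many edge contractions are checked directly to destroy bipartiteness or fail to give the Heawood graph.

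For completeness, suppose $G$ is minor minimal bipartite intrinsically knotted with at most $22$ edges and is neither of the two graphs above. Being intrinsically knotted, $G$ has at least $21$ edges; if it had $21$ it would be a KS graph and, being bipartite, the Heawood graph, so $G$ has exactly $22$ edges. Moreover $G$ has no Heawood minor, since such a minor would be a proper bipartite intrinsically knotted minor, contradicting minimality. I would next force $\delta(G)\ge 3$. A vertex of degree $\le 1$ is excluded: an isolated vertex can be deleted to give a proper bipartite intrinsically knotted minor, while deleting a degree-$1$ vertex and its edge preserves intrinsic knottedness and yields a $21$-edge bipartite intrinsically knotted graph, forcing a Heawood minor. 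A vertex $v$ of degree $2$ with neighbours $a,b$ (necessarily non-adjacent, as both lie in one part) exhibits $G$ as a one-vertex subdivision of the $21$-edge graph $G^{s}$ obtained by deleting $v$ and joining $a$ to $b$; since $G$ and $G^{s}$ are homeomorphic, $G^{s}$ is intrinsically knotted and hence a KS graph. If $G^{s}$ is the Heawood graph then contracting $va$ gives a Heawood minor, a contradiction; otherwise $G^{s}$ is one of the $13$ non-bipartite KS graphs, and bipartiteness of its subdivision requires $G^{s}-ab$ to be bipartite with $a,b$ on the same side, a condition one verifies none of the thirteen satisfy. Hence $\delta(G)\ge 3$.

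With $22$ edges and $\delta(G)\ge 3$ the degree sum $44$ gives at most $14$ vertices, and if the parts have sizes $a\le b$ then each part sums to $22$ while every vertex-degree lies between $3$ and the size of the opposite part; this forces $4\le a\le b\le 7$ and $ab\ge 22$, leaving only the bipartitions $(4,6),(4,7),(5,5),(5,6),(5,7),(6,6),(6,7),(7,7)$, each with a tightly constrained degree sequence (for instance $(7,7)$ has degrees $4,3,3,3,3,3,3$ in each part). The cases $(4,6)$ and $(4,7)$ are eliminated at once: deleting two of the four vertices of the small part leaves a bipartite graph with a part of size $2$, which is planar, so every such $G$ is $2$-apex and therefore not intrinsically knotted. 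For the remaining bipartitions I would enumerate, up to isomorphism, the finitely many bipartite graphs with the allowed degree sequences and apply the dichotomy: if $G$ is $2$-apex it is not intrinsically knotted and is discarded; if $G$ is not $2$-apex I would locate a Heawood minor (discarding $G$ as non-minimal) or recognise $G$ as Cousin $110$. Any residual non-$2$-apex graph with no Heawood minor that is nonetheless not intrinsically knotted would be handled by exhibiting a knotless embedding directly.

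The main obstacle is this last step: the enumeration in the larger bipartitions, especially $(6,7)$ and the near-Heawood case $(7,7)$, is sizeable, and for each graph one must both decide $2$-apexness (a planarity test ranging over all pairs of deleted vertices) and, when that fails, reliably detect a Heawood minor or else identify Cousin $110$. Organising this search so that it is provably exhaustive, and verifying that exactly one graph survives as not $2$-apex with no Heawood minor -- namely Cousin $110$ -- is the technical heart of the argument; the $2$-apex reductions above are precisely what keep the remaining casework finite and tractable.
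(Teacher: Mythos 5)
Your high-level reduction is correct and, in fact, mirrors the paper's own proof of this theorem almost step for step: minimality of the Heawood graph and of Cousin 110, elimination of vertices of degree at most two by contracting at the degree-two vertex to reach a 21-edge intrinsically knotted (hence KS) graph and checking that no vertex split of a KS graph is bipartite, and the reduction to 22-edge graphs with $\delta(G)\ge 3$. Your disposal of the $(4,6)$ and $(4,7)$ bipartitions by noting that deleting two vertices of the small part leaves a subgraph of $K_{2,n}$ is a clean alternative to the paper's Proposition~\ref{prop:deg6}, which instead rules out vertices of degree six or more.

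The genuine gap is the step you yourself flag as the ``technical heart'': the claim that in the remaining bipartitions every 22-edge bipartite graph with $\delta\ge 3$ is either 2-apex, has a Heawood minor, or is Cousin 110. This is precisely the paper's Theorem~\ref{thm:main3} (the only such intrinsically knotted graphs are cousins 89 and 110 of the $E_9+e$ family), and its proof occupies everything from Section 2 onward. Your proposal gives no method for executing this search, only the assertion that it is finite: enumerating up to isomorphism, say, the $(7,7)$ bipartite graphs with degree sequence $(4,3,3,3,3,3,3)$ in each part, and testing 2-apexness of each over all vertex pairs, is not an argument one can write down or verify by hand. The paper makes the case analysis tractable with machinery your outline lacks: the reduced graphs $\widehat{G}_{a,b}$, the count equation for $|\widehat{E}_{a,b}|$ together with Proposition~\ref{prop:planar}, and the structural lemmas on $\widetilde{K}_{3,3}$ and $\widetilde{P}_{10}$, which let a few well-chosen vertex pairs force contradictions after only a handful of edges are placed, so that whole families of candidates are eliminated without ever being generated. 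Moreover, your fallback for ``residual'' graphs---exhibiting a knotless embedding directly---is a hope rather than a method; the paper's analysis is organized exactly so that no such residual case arises (every surviving configuration is shown planar after reduction, often via bi-gons). Without carrying out this classification, or supplying tools that make it provably exhaustive, the proposal does not constitute a proof of the theorem.
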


As we show below, the argument quickly reduces to graphs of minimum degree $\delta(G)$ at least three, 
for which we have:

\begin{theorem}\label{thm:main3}
There are exactly two bipartite intrinsically knotted graphs with 22 edges and minimum degree at least three,
the two cousins 89 and 110 of the $E_9+e$ family.
\end{theorem}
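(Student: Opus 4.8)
The plan is to bound the size of $G$, enumerate the finitely many bipartite shapes that survive, and then eliminate all but two of them using the known fact that a \emph{$2$-apex} graph (one that becomes planar after the deletion of two vertices) cannot be intrinsically knotted. First I would pin down the vertex count. With $|E(G)| = 22$ and $\delta(G) \ge 3$, the handshake identity $\sum_v \deg(v) = 44 \ge 3|V(G)|$ forces $|V(G)| \le 14$. Writing the bipartition as $(A,B)$ with $a = |A| \le |B| = b$, every vertex of $B$ has all its neighbours in $A$, so its degree is at most $a$; hence $\delta(G) \ge 3$ requires $a \ge 3$. The case $a = 3$ is impossible, since then every vertex of $B$ would have degree exactly $3$ and the edge count would be $22 = \sum_{v \in B}\deg(v) = 3b$, not an integer multiple. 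Thus $a \ge 4$, and combining $ab \ge 22$ with $a + b \le 14$ leaves only the pairs $(a,b)$ with $10 \le a+b \le 14$. For each such pair the degree sequences are tightly restricted: on side $B$ the degrees lie in $[3,a]$ and sum to $22$, and symmetrically on side $A$, so only a handful of sequences occur and the excess $44 - 3|V(G)|$ is distributed in just a few ways.

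Next, for each admissible bipartition and degree sequence I would enumerate, up to isomorphism, the connected bipartite graphs realizing it. Two structural facts cut the work down sharply. Bipartiteness forces girth at least four, which removes most short-cycle configurations. And $G$ may be assumed $2$-connected: a knotted cycle is itself $2$-connected and therefore lies inside a single block, so a graph with a cut vertex is intrinsically knotted only if one of its (necessarily bipartite) blocks is, and the only bipartite intrinsically knotted graph with at most $21$ edges is the Heawood graph; that graph already uses $21$ of the available $22$ edges and all $14$ vertices, leaving no room to attach a further block under the degree hypothesis. These reductions reduce the problem to a manageable, if lengthy, list of candidate graphs for each value $|V(G)| \in \{10,11,12,13,14\}$.

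The core of the argument is then to show that every candidate other than cousins $89$ and $110$ is $2$-apex, and hence not intrinsically knotted. For each candidate I would search for a pair of vertices whose deletion planarizes the graph, organizing the search around the high-degree vertices (deleting the two of largest degree removes the most edges and is the likeliest to succeed), and certifying planarity of $G - \{u,v\}$ by the absence of a $K_5$ or $K_{3,3}$ minor (Kuratowski--Wagner). Since cousins $89$ and $110$ are already known to be intrinsically knotted as members of the $E_9+e$ family, the logic never needs the converse direction: once every other candidate is exhibited as $2$-apex, the intrinsically knotted graphs among all candidates are exactly the two non-$2$-apex survivors, which are $89$ and $110$. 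It remains only to confirm that these two do indeed appear in the enumeration for their bipartition and are the unique members that resist every apex deletion.

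The main obstacle is the breadth and completeness of the enumeration together with the casework of the $2$-apex test. One must be certain that the isomorphism-class listing is exhaustive for each of the several $(a,b)$ types, and the delicate cases are precisely those candidates sharing a bipartition and degree sequence with cousin $89$ or $110$, where a quick apex deletion may not resolve matters and a careful structural comparison is needed to decide whether a given graph is merely a relabelling of a cousin or is genuinely $2$-apex. I expect the girth and degree-sequence constraints, sharpened by $2$-connectivity, to keep each individual case small enough to settle by hand or by a short computer check, so that the real difficulty lies in the organization and bookkeeping of the case analysis rather than in any single graph.
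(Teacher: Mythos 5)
Your strategy shares its logical skeleton with the paper's proof: both eliminate every candidate except Cousins 89 and 110 by exhibiting two vertices whose deletion leaves a planar graph (the 2-apex criterion), and both use the already-known intrinsic knottedness of the two cousins so that only the easy direction of that criterion is ever needed. Your preliminary reductions are also sound: the bound $|V(G)|\le 14$, the exclusion of a part of size 3, and the cut-vertex/block argument reducing to 2-connected graphs are all correct (the paper does not even need the last of these).

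The genuine gap is that the core of the proof --- showing that every candidate other than Cousins 89 and 110 really is 2-apex --- is never carried out, and the method you propose for it (enumerate, up to isomorphism, all bipartite graphs with 22 edges and minimum degree at least 3, then test vertex pairs and certify planarity via Kuratowski--Wagner) comes with no device that makes it feasible as a proof; not a single case is actually settled, and the feasibility claim rests on the hope that each case is ``small enough to settle by hand or by a short computer check.'' Two concrete missing ingredients stand out. First, you never bound the maximum degree; the paper begins by proving that no vertex can have degree 6 or more, which cuts each side's degree sequence down to six types and is itself a nontrivial argument. Second, and more importantly, you have no analogue of the paper's count equation: for the reduced graph $\widehat{G}_{a,b}$ obtained from $G\setminus\{a,b\}$ by suppressing degree-1 and degree-2 vertices,
$$|\widehat{E}_{a,b}| = 22 - |E(a)\cup E(b)| - \left(|V_3(a)|+|V_3(b)|-|V_3(a,b)|+|V_4(a,b)|+|V_Y(a,b)|\right),$$
so that planarity of $\widehat{G}_{a,b}$ (hence of $G\setminus\{a,b\}$) is certified by pure arithmetic on local degree data ($|\widehat{E}_{a,b}|\le 8$, or $=9$ and not $K_{3,3}$), letting whole degree-sequence classes be eliminated at once without ever constructing the graphs; the residual hard cases are then handled by two structural lemmas characterizing the only non-planar configurations ($\widetilde{K}_{3,3}$ and $\widetilde{P}_{10}$) that can survive. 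This machinery is the actual mathematical content of the theorem, and it is precisely what your plan replaces with unexecuted enumeration. A smaller point: your guess about where the delicate cases lie is off. The hardest cases in the paper are the degree sequences $([1,2,3],[1,2,3])$, $([1,2,3],[0,4,2])$ and $([0,4,2],[0,4,2])$, which are not the sequences of either cousin; Cousin 110's sequence $([3,1,1],[3,1,1])$ is in fact the easiest case of all, since there the graph is forced uniquely.
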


We remark that Cousin 110 was earlier identified as bipartite intrinsically knotted in 
\cite[Theorem 3]{HAMM} as part of a classification of such graphs on ten or fewer vertices. 
It follows from that classification that Cousin 110 is 
the only minor minimal bipartite intrinsically knotted graph of order ten or less.
It would be interesting to know if there are further examples of order between 11 and 14, 
which is the order of the Heawood graph. 
Such examples would have at least 23 edges.

\begin{proof}[Proof of Theorem~\ref{thm:main}] 
Suppose $G$ is bipartite intrinsically knotted, with $\|G \| \leq 22$. 
If $\delta(G) \leq 1$, we may delete a vertex (and its edge, if it has one) to obtain a proper minor 
that also has this property, so $G$ is not minor minimal. 
If $\delta(G) = 2$, then contracting an edge adjacent to a degree two vertex gives a minor $H$ 
that remains intrinsically knotted and is of size at most 21. 
Thus $H$ is one of the KS graphs. 
In other words $G$ is obtained by a vertex split of the KS graph $H$. 
Now, a graph obtained in this way from a KS graph will be intrinsically knotted and have 22 edges. 
However, it's straightforward to verify that it cannot be bipartite.

So, we can assume $\delta(G) \geq 3$. 
If $\|G\| = 21$, $G$ must be a KS graph and
$C_{14}$, the Heawood graph, is the only bipartite graph in this set. 
As graphs of 20 edges are not intrinsically knotted~\cite{JKM,M}, 
$C_{14}$ is minor minimal for intrinsic knotting and, so, also for bipartite intrinsically knotted.

By Theorem~\ref{thm:main3}, if $\|G\| = 22$, $G$ must be one of the two cousins in the $E_9+e$ family. 
Goldberg et al.~\cite{GMN} showed that all graphs in this family are intrinsically knotted. 
However, Cousin 89 is formed by adding an edge to the Heawood graph and is not minor minimal. 
On the other hand, it's easy to verify that Cousin 110 is minor minimal bipartite intrinsically knotted and, 
therefore, the only such graph on 22 edges.
\end{proof}

The remainder of this paper is a proof of Theorem~\ref{thm:main3}. 
In the next section we introduce some terminology and outline the strategy of our proof.

\section{Terminology and strategy} \label{sec:term}

Henceforth, let $G=(A,B,E)$ denote a bipartite graph with 22 edges
whose partition has the parts $A$ and $B$ with $E$ denoting the edges of the graph.
Note that $G$ is triangle-free.
For any two distinct vertices $a$ and $b$,
let $\widehat{G}_{a,b}$ denote the graph
obtained from $G$ by deleting $a$ and $b$,
and then contracting edges adjacent to vertices of degree 1 or 2,
one by one repeatedly, until no vertices of degree 1 or 2 remain.
Removing vertices means deleting interiors of all edges adjacent to these vertices
and any remaining isolated vertices.
Let $\widehat{E}_{a,b}$ denote the set of edges of $\widehat{G}_{a,b}$.
The distance, denoted by ${\rm dist}(a,b)$, between $a$ and $b$ is the number of edges
in the shortest path connecting them.
If $a$ has the distance 1 from $b$, then we say that $a$ and $b$ are {\em adjacent\/}.
The degree of $a$ is denoted by $\deg(a)$.
Note that $\sum_{a \in A} \deg(a) = \sum_{b \in B} \deg(b) = 22$ by the definition of bipartition.
To count the number of edges of $\widehat{G}_{a,b}$, we have some notation.

\begin{itemize}
\item $E(a)$ is the set of edges that are adjacent to a vertex $a$.
\item $V(a)=\{c \in A \cup B\ |\ {\rm dist}(a,c)=1\}$
\item $V_n(a)=\{c \in A \cup B\ |\ {\rm dist}(a,c)=1,\ \deg(c)=n\}$
\item $V_n(a,b)=V_n(a) \cap V_n(b)$
\item $V_Y(a,b)=\{c \in A \cup B\ |\ \exists \ d \in V_3(a,b) \ \mbox{such that}
\ c \in V_3(d) \setminus \{a,b\}\}$
\end{itemize}

Obviously in $G \setminus \{a,b\}$ for some distinct vertices $a$ and $b$,
each vertex of $V_3(a,b)$ has degree 1.
Also each vertex of $V_3(a), V_3(b)$ (but not of $V_3(a,b)$) and $V_4(a,b)$ has degree 2.
Therefore to derive $\widehat{G}_{a,b}$ all edges adjacent to $a,b$ and $V_3(a,b)$
are deleted from $G$,
followed by contracting one of the remaining two edges adjacent to each vertex of
$V_3(a)$, $V_3(b)$, $V_4(a,b)$ and $V_Y(a,b)$ as in Figure \ref{fig21}(a).
Thus we have the following equation counting the number of edges of $\widehat{G}_{a,b}$
which is called the {\em count equation\/};
$$|\widehat{E}_{a,b}| = 22 - |E(a)\cup E(b)| -
(|V_3(a)|+|V_3(b)|-|V_3(a,b)|+|V_4(a,b)|+|V_Y(a,b)|).$$

\begin{figure}[h]
\includegraphics[scale=1]{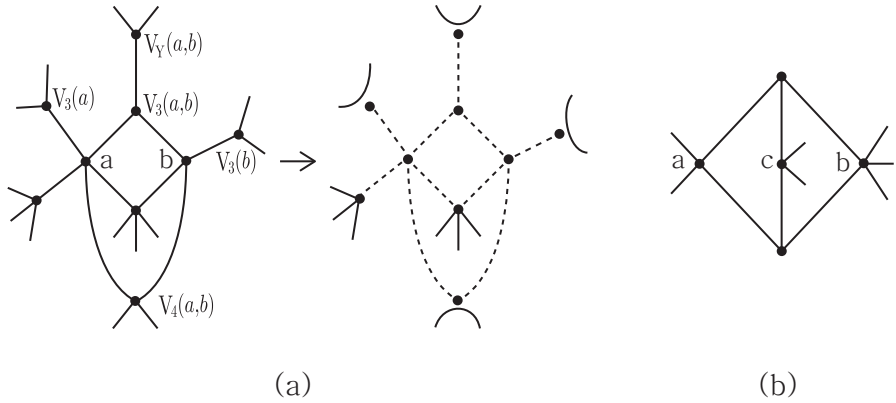}
\caption{Deriving $\widehat{G}_{a,b}$}
\label{fig21}
\end{figure}

For short, write $NE(a,b) = |E(a)\cup E(b)|$ and $NV_3(a,b) = |V_3(a)|+|V_3(b)|-|V_3(a,b)|$.
If $a$ and $b$ are adjacent (i.e. dist$(a,b)=1$),
then $V_3(a,b), V_4(a,b)$ and $V_Y(a,b)$ are all empty sets because $G$ is triangle-free.
Note that the derivation of $\widehat{G}_{a,b}$ must be handled slightly differently
when there is a vertex $c$ in $V$ such that more than one vertex of $V(c)$ is contained
in $V_3(a,b)$ as in Figure \ref{fig21}(b).
In this case we usually delete or contract more edges even though $c$ is not in $V_Y(a,b)$.

The following proposition, which was mentioned in \cite{LKLO},
gives two important conditions that ensure a graph fails to be intrinsically knotted.
Note that $K_{3,3}$ is a triangle-free graph and every vertex has degree 3.

\begin{proposition} \label{prop:planar}
If $\widehat{G}_{a,b}$ satisfies one of the following two conditions,
then $G$ is not intrinsically knotted.
\begin{itemize}
\item[(1)] $|\widehat{E}_{a,b}| \leq 8$, or
\item[(2)] $|\widehat{E}_{a,b}|=9$ and $\widehat{G}_{a,b}$ is not isomorphic to $K_{3,3}$.
\end{itemize}
\end{proposition}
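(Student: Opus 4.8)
The plan is to show that, under either hypothesis, the reduced graph $\widehat{G}_{a,b}$ is planar, and then to conclude that $G$ is not intrinsically knotted by appealing to the standard fact that a graph which becomes planar after deleting two vertices (a \emph{$2$-apex} graph) admits a knotless embedding. The first observation I would make is that the reduction producing $\widehat{G}_{a,b}$ from $G \setminus \{a,b\}$ consists only of deleting degree-one vertices together with their edges and of suppressing degree-two vertices (contracting one of their two incident edges). Both moves preserve planarity in each direction, so $\widehat{G}_{a,b}$ is planar if and only if $G \setminus \{a,b\}$ is planar. Consequently it suffices to prove that $\widehat{G}_{a,b}$ is planar: for then $G \setminus \{a,b\}$ is planar, $G$ is $2$-apex, and hence $G$ is not intrinsically knotted.

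To establish planarity I would invoke Kuratowski's theorem: a graph is nonplanar precisely when it contains a subdivision of $K_5$ or of $K_{3,3}$. A subdivision of $K_5$ uses at least $10$ edges and a subdivision of $K_{3,3}$ uses at least $9$. Thus, in case (1), where $|\widehat{E}_{a,b}| \leq 8$, neither Kuratowski subgraph can fit, so $\widehat{G}_{a,b}$ is automatically planar.

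For case (2), I would argue by contradiction: suppose $\widehat{G}_{a,b}$ has exactly $9$ edges but is nonplanar. The edge count rules out a $K_5$ subdivision, so $\widehat{G}_{a,b}$ must contain a subdivision of $K_{3,3}$. Since $K_{3,3}$ itself already has $9$ edges, and subdividing only lengthens paths (hence adds edges), a $K_{3,3}$ subdivision inside a $9$-edge graph can involve no extra subdivision vertices; the subdivision is therefore an honest copy of $K_{3,3}$ occupying all $9$ edges. Here I use the key structural feature of the reduction, namely that every vertex of $\widehat{G}_{a,b}$ has degree at least three: this guarantees there are no leftover vertices outside the $K_{3,3}$, since any such vertex would demand edges beyond the nine already accounted for. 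Hence $\widehat{G}_{a,b} \cong K_{3,3}$, contradicting the hypothesis, and so $\widehat{G}_{a,b}$ must be planar.

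The routine verifications---that suppressing degree-two vertices and deleting degree-one vertices really do preserve planarity, and that the $2$-apex property implies the absence of intrinsic knotting---are standard, and I would simply cite them. The step most deserving of care is the argument in case (2): one must be sure that a nonplanar $9$-edge graph of minimum degree three can only be $K_{3,3}$, which is exactly where the minimum-degree-three property of $\widehat{G}_{a,b}$ does the essential work. Everything else is bookkeeping with edge counts.
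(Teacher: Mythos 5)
Your proof is correct and follows essentially the same route as the paper: establish that $\widehat{G}_{a,b}$ (and hence $G \setminus \{a,b\}$) is planar under either hypothesis, then invoke the known result that $2$-apex graphs are not intrinsically knotted. The only difference is one of detail: the paper simply asserts the two planarity facts and cites the literature, whereas you supply the Kuratowski edge-counting argument (including the correct use of the minimum-degree-three property of $\widehat{G}_{a,b}$ in case (2)), which the paper leaves implicit.
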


\begin{proof}
If $|\widehat{E}_{a,b}| \leq 8$, then $\widehat{G}_{a,b}$ is a planar graph.
Also if $|\widehat{E}_{a,b}|=9$, then $\widehat{G}_{a,b}$ is
either a planar graph or isomorphic to $K_{3,3}$.
It is known that if $\widehat{G}_{a,b}$ is planar,
then $G$ is not intrinsically knotted \cite{{BBFFHL},{OT}}.
\end{proof}

In proving Theorem~\ref{thm:main3}
it is sufficient to  consider connected graphs having no vertex of degree 1 or 2.
Our process is to construct all possible such bipartite graphs $G$ with 22 edges,
delete two vertices $a$ and $b$ of $G$,
and then count the number of edges of $\widehat{G}_{a,b}$.
If $\widehat{G}_{a,b}$ has 9 edges or less and is not isomorphic to $K_{3,3}$,
then we conclude that $G$ is not intrinsically knotted by Proposition \ref{prop:planar}.

\begin{proposition} \label{prop:deg6}
There is no bipartite intrinsically knotted graph with 22 edges and minimum
degree at least three that has a vertex of degree 6 or more.
\end{proposition}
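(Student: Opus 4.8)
The plan is to exhibit, for a suitably chosen pair of vertices $a,b$, a reduced graph $\widehat{G}_{a,b}$ with at most nine edges that is not isomorphic to $K_{3,3}$, and then invoke Proposition~\ref{prop:planar}. First I would bound the maximum degree. If $a\in A$ has degree $d$, its $d$ neighbors are distinct vertices of $B$, so $|B|\ge d$; since every vertex of $B$ has degree at least three, $22=\sum_{v\in B}\deg(v)\ge 3|B|\ge 3d$, forcing $d\le 7$. Hence it suffices to treat the two cases $\deg(a)=7$ and $\deg(a)=6$, where throughout $a$ denotes a vertex of maximum degree and $b$ an auxiliary vertex to be chosen.

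The degree-seven case is rigid. Here $|B|=7$ and $a$ is adjacent to all of $B$, and the only way to write $22$ as a sum of seven integers each at least three is $3+3+3+3+3+3+4$, so six neighbors of $a$ have degree three and hence $|V_3(a)|=6$. Choosing $b$ to be an adjacent degree-three vertex, triangle-freeness makes $V_3(a,b)$, $V_4(a,b)$ and $V_Y(a,b)$ empty, so the count equation gives $|\widehat{E}_{a,b}|=22-(7+3-1)-(6+|V_3(b)|)=7-|V_3(b)|\le 7$, and $G$ is not intrinsically knotted by Proposition~\ref{prop:planar}(1).

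For $\deg(a)=6$ I would split into subcases according to $|B|\in\{6,7\}$ and the degree sequence induced on the neighborhood of $a$. In the typical situation, where $a$ has several degree-three neighbors, choosing an adjacent degree-three vertex $b$ and again using triangle-freeness yields $|\widehat{E}_{a,b}|=22-(6+3-1)-(|V_3(a)|+|V_3(b)|)=14-|V_3(a)|-|V_3(b)|$, which is at most nine as soon as $|V_3(a)|+|V_3(b)|\ge 5$. A short count of the forced degree sequences shows this inequality holds in all but the densest configurations.

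The main obstacle is exactly those dense configurations, for instance when the neighborhood of $a$ consists mostly of degree-four vertices so that $|V_3(a)|$ is as small as two, and when the opposite part also has few degree-three vertices; then no adjacent choice of $b$ removes enough edges. For these I would instead take $b$ in the same part as $a$, so that $a$ and $b$ are non-adjacent and share neighbors, and exploit the extra reductions coming from $V_3(a,b)$, $V_4(a,b)$ and $V_Y(a,b)$ in the count equation, being careful to apply the special handling of Figure~\ref{fig21}(b) when some vertex has two neighbors in $V_3(a,b)$. Organizing this finite but intricate enumeration so that every admissible degree sequence on $A$ and $B$ is covered is the crux of the argument. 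Finally, in the borderline cases where the count equation yields exactly nine edges, I would rule out $\widehat{G}_{a,b}\cong K_{3,3}$ directly, for example by exhibiting a vertex of degree different from three or a cut vertex in the reduced graph, and then conclude once more by Proposition~\ref{prop:planar}.
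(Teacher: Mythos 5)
Your degree bound and the degree-seven case are essentially sound, but you misapply the count equation in a way that matters later. You take $b$ to be a degree-three \emph{neighbor} of $a$, so $b$ itself lies in $V_3(a)$; since $b$ is deleted rather than contracted, the contraction term must use $|V_3(a)|-1$, not $|V_3(a)|$. In the degree-seven case the correct bound is therefore $|\widehat{E}_{a,b}|\le 8-|V_3(b)|$, not $7-|V_3(b)|$; this still meets condition (1) of Proposition~\ref{prop:planar}, so that case survives. (The paper's proof avoids the issue by taking $b$ to be the unique degree-four vertex of $B$, which gives $|\widehat{E}_{a,b}|\le 6$.) The same off-by-one is fatal to your ``typical situation'' in the degree-six case: with $b$ an adjacent degree-three vertex the correct count is $15-|V_3(a)|-|V_3(b)|$, so $|V_3(a)|+|V_3(b)|\ge 5$ yields only $|\widehat{E}_{a,b}|\le 10$, which proves nothing. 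You would need $|V_3(a)|+|V_3(b)|\ge 6$, and since one of $b$'s three neighbors is $a$ itself, $|V_3(b)|\le 2$, so this forces $|V_3(a)|\ge 4$ --- a much narrower ``typical'' regime than your sketch assumes.

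The deeper problem is that the degree-six case, which is the real content of the proposition, is never actually proved: you describe a plan and then state that organizing the enumeration ``is the crux of the argument,'' deferring both that enumeration and all $K_{3,3}$ exclusions to unspecified case work. That crux is exactly what the paper supplies, by a short, targeted argument rather than an enumeration: choose a second vertex $c$ in the \emph{same} part $A$ with $\deg(c)\ge 4$ (such $c$ exists because $22-6=16$ is not a multiple of $3$); since $a$ and $c$ are non-adjacent, $NE(a,c)\ge 10$, so one may assume $|V_3(a)|+|V_4(a,c)|\le 3$, or else $|\widehat{E}_{a,c}|\le 8$ finishes immediately. This single inequality pins down the structure: $B$ has exactly six vertices with $[B]=[1,2,3]$, $c$ has degree four and is adjacent to the degree-five vertex $b\in B$ and three degree-three vertices, $A$ has at most three vertices of degree exceeding three, hence $|V_3(b)|\ge 2$, and then the adjacent pair $(a,b)$ gives $|\widehat{E}_{a,b}|\le 22-10-(3+2)=7$. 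Note that the paper never needs the $K_{3,3}$ borderline in this proposition at all. To repair your proposal you would have to carry out, with the corrected count equation, the enumeration you postponed; as written it is a strategy for a proof rather than a proof.
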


\begin{proof}
Suppose that $G$ is an intrinsically knotted graph with 22 edges
that has a vertex $a$ in $A$ of degree 6 or more.
Since $\sum_{b \in B} \deg(b) = 22$ and each vertex of $B$ has degree at least 3,
$B$ consists of at most seven vertices, so the degree of $a$ cannot exceed 7.

If $\deg(a) = 7$, then $B$ consists of seven vertices, 
and so one vertex $b$ has degree 4 and the others have degree 3.
Then $NE(a,b) = 10$ and $|V_3(a)| = 6$.
By the count equation, $|\widehat{E}_{a,b}| \leq 6$ in $\widehat{G}_{a,b}$.

Now assume that $\deg(a) = 6$. 
Since $\sum_{c \in A} \deg(c) = 22$, there is a vertex $c$ of degree at least 4 in $A$.
We may assume that $|V_3(a)| + |V_4(a,c)| \leq 3$, 
otherwise $|\widehat{E}_{a,c}| \leq 8$ because $NE(a,c) \geq 10$.
Because $|V_3(a)| \leq 3$, $B$ consists of exactly six vertices and 
at most three vertices in $B$ have degree 3. 
Furthermore $c$ is adjacent to at most three vertices of degree 3 or 4 in $B$.
Therefore, eventually $c$ has degree 4, 
and is adjacent to a degree 5 vertex $b$ and three other degree 3 vertices in $B$
as shown in Figure \ref{fig22}.
If there is another vertex of degree more than 3 in $A$,
then, like $c$, it is adjacent to three degree 3 vertices in $B$.
Therefore $A$ can have at most three vertices of degree more than 3, including $a$ and $c$.
This implies that $V_3(b) \geq 2$, and so $|\widehat{E}_{a,b}| \leq 7$.
\end{proof}

\begin{figure}[h]
\includegraphics[scale=1]{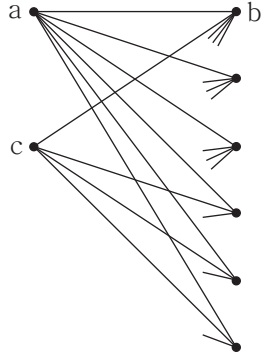}
\caption{The case of $\deg(a) = 6$}
\label{fig22}
\end{figure}

Therefore each vertex of $A$ and $B$ has degree 3, 4 or 5 only.
Let $A_n$ denote the set of vertices in $A$ of degree $n = 3,4,5$ 
and $[A] = [|A_5|, |A_4|,|A_3|]$ and similarly for $B$.
Then $[A] = [3,1,1]$, $[2,3,0]$, $[2,0,4]$, $[1,2,3]$, $[0,4,2]$ or $[0,1,6]$.
Without loss of generality, we may assume that $|A_5| \geq |B_5|$,
and if $|A_5| = |B_5|$ then $|A_4| \geq |B_4|$.

This paper relies on the technical machinery developed in \cite{LKLO}.
We divide the proof of Theorem \ref{thm:main3} according to the size of $A_5$, 
always under the assumption that our graphs are of minimum degree at least three.
In Section 3, we show that the only bipartite intrinsically knotted graph
with two or more degree 5 vertices in $A$ is Cousin 110 of the $E_9+e$ family.
In Section 4, we show that there is no bipartite intrinsically knotted graph
with exactly one degree 5 vertex in $A$.
In Section 5, we show that the only bipartite intrinsically knotted graph
with all vertices of degree at most 4 is Cousin 89 of the $E_9+e$ family.

\section{Case of $|A_5| \geq 2$} 

In this case, $A$ has at least two degree 5 vertices, say $a$ and $b$, 
and so $[A]$ is one of $[3,1,1]$, $[2,3,0]$ or $[2,0,4]$.

First we consider the case that $[B]$ is $[3,1,1]$, in other words, $([A], [B]) = ([3,1,1], [3,1,1])$.
The edges adjacent to each vertex in $A_5$ and $B_5$ are constructed in a unique way 
because both $A$ and $B$ have exactly five vertices.
This determines 21 edges of the graph
so that both $A$ and $B$ have three degree 5 vertices and two degree 3 vertices. 
Now we add a final, dashed edge to connect two degree 3 vertices, one in $A$ and another in $B$.
We get the graph shown in Figure \ref{fig31} (a)
which is Cousin 110 of the $E_9+e$ family and is intrinsically knotted.

In case $[B]$ is $[2,3,0]$, i.e., $([A], [B])$ is either $([3,1,1], [2,3,0])$ or $([2,3,0], [2,3,0])$,
we similarly construct the edges adjacent to each vertex in $A_5$ and $B_5$ in a unique way.
We add the remaining dashed edges which are also determined uniquely.
This gives, for the two cases, the graphs shown 
in Figure \ref{fig31} (b) and (c), respectively.
In both cases, $\widehat{G}_{a,b}$ is planar for the vertices $a$ and $b$ shown in the figures.

\begin{figure}[h]
\includegraphics[scale=1]{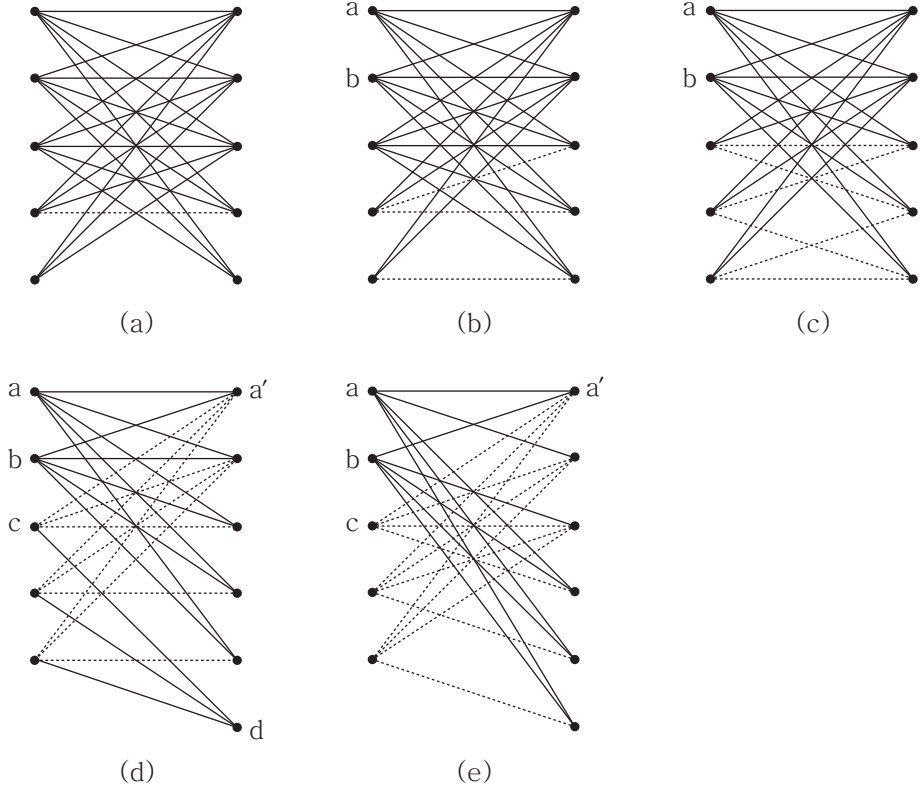}
\caption{The cases that $A$ has at least two degree 5 vertices}
\label{fig31}
\end{figure}

Now consider the case where $[B]$ is $[2,0,4]$, 
i.e., $([A], [B])$ is one of $([3,1,1], [2,0,4])$, $([2,3,0], [2,0,4])$ or $([2,0,4], [2,0,4])$.
We may assume that $NV_3(a,b) \leq 3$, otherwise $|\widehat{E}_{a,b}| \leq 8$ because $NE(a,b) \geq 10$.
Thus $a$ and $b$ are adjacent to the same five vertices in $B$ as shown in Figure \ref{fig31} (d).
Let $d$ be the remaining degree 3 vertex in $B$,
with $d$ adjacent to three vertices other than $a$ and $b$.
If $[A]$ is $[3,1,1]$, the remaining vertex in $A_5$ must be adjacent to the same vertices as $a$.
This is impossible because this vertex is also adjacent to $d$. 
If $[A]$ is $[2,3,0]$, the remaining dashed edges can be added in a unique way.
Let $c$ be a vertex in $A_4$.
Since $NE(a,c) = 9$ and $NV_3(a,c) = 4$,  then $|\widehat{E}_{a,c}| \leq 9$.
Since $\widehat{G}_{a,c}$ has the degree 4 vertex $b$, it is not isomorphic to $K_{3,3}$.
If $[A]$ is $[2,0,4]$, let $a'$ be a vertex in $B_5$,
then $NE(a,a') = 9$ and $NV_3(a,a') \geq 6$,  so $|\widehat{E}_{a,a'}| \leq 7$.

Consider the case where $[B]$ is $[1,2,3]$, 
i.e., $([A], [B])$ is one of $([3,1,1], [1,2,3])$, $([2,3,0], [1,2,3])$ or $([2,0,4], [1,2,3])$.
Similarly we may assume that $NV_3(a,b) + |V_4(a,b)| \leq 3$, otherwise $|\widehat{E}_{a,b}| \leq 8$.
Thus $a$ and $b$ are adjacent to the same four vertices of degree 5 or 3, 
but different degree 4 vertices in $B$ as shown in Figure \ref{fig31} (e).
If $[A]$ is $[3,1,1]$, the remaining vertex in $A_5$ must be adjacent to another 
degree 4 vertex
which is not adjacent to $a$ and $b$, but this is impossible. 
For the remaining two cases, we follow the same argument as the last two cases when $[B]$ was $[2,0,4]$.

It remains to consider  $[B] = [0,4,2]$ or $[0,1,6]$.
For any two vertices $a$ and $b$ in $A_5$,
$NE(a,b) = 10$ and $NV_3(a,b) + |V_4(a,b)| \geq 4$,  and so $|\widehat{E}_{a,b}| \leq 8$.

\section{Case of $|A_5| = 1$} 

In this case, there is only one choice for $[A]$,  $[1,2,3]$.
Let $a$, $b_1$, $b_2$, $c_1$, $c_2$ and $c_3$ be the degree 5 vertex, 
the two degree 4 vertices and the three degree 3 vertices in $A$.
There are three choices for $[B]$, $[1,2,3]$, $[0,4,2]$ or $[0,1,6]$.
We divide into three subsections, one for each case.

\subsection{$[B] = [1,2,3]$}   \hspace{1cm}

Let $a'$, $b'_1$, $b'_2$, $c'_1$, $c'_2$ and $c'_3$
be the degree 5 vertex, the two degree 4 vertices and the three degree 3 vertices in $B$.
If $NV_3(a,a') \geq 5$, then $|\widehat{E}_{a,a'}| \leq 8$.
Therefore $NV_3(a,a') = 4$ and we get the subgraph shown in Figure \ref{fig41} (a).
Assume that $c_3$ and $c'_3$ are the remaining unused vertices.

\begin{figure}[h]
\includegraphics[scale=1]{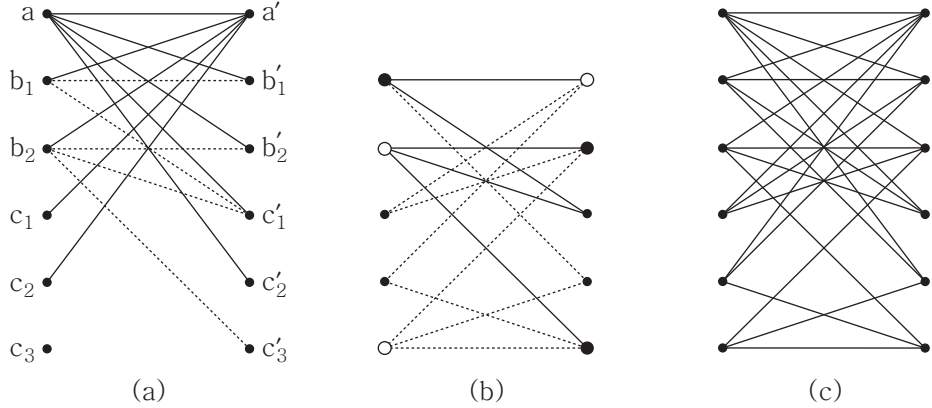}
\caption{$[A] = [B] = [1,2,3]$}
\label{fig41}
\end{figure}

If $V_4(b_1)$ is empty, i.e., $|V_3(b_1)| = 3$, then $|\widehat{E}_{b_1,a'}| \leq 9$ and 
$\widehat{G}_{b_1,a'}$ has the degree 4 vertex $a$.
Thus $|V_4(b_1)| \geq 1$, and similarly $|V_4(b_2)|$, $|V_4(b'_1)|$ and $|V_4(b'_2)| \geq 1$.
Without loss of generality, we say that ${\rm dist}(b_1,b'_1) =1$ and ${\rm dist}(b_2,b'_2) =1$.
If $V_4(c'_3)$ is empty, i.e., $|V_3(c'_3)| = 3$, then $|\widehat{E}_{a,c'_3}| \leq 9$ and 
$\widehat{G}_{a,c'_3}$ has the degree 4 vertex $b_1$.
Thus we may assume that ${\rm dist}(b_2,c'_3) =1$.
If ${\rm dist}(b_2,b'_1) =1$, then $|\widehat{E}_{a,b_2}| \leq 8$.
Thus we also assume that ${\rm dist}(b_2,c'_1) =1$.
If ${\rm dist}(c_i,c'_1) =1$ for some $i=1,2,3$, then $|\widehat{E}_{a,b_2}| \leq 8$
because $NV_3(a,b_2) + |V_4(a,b_2)| = 4$ and $V_Y(a,b_2) = \{ c_i \}$.
Therefore ${\rm dist}(b_1,c'_1) =1$.

Now consider the graph $\widehat{G}_{a,a'}$.
Since $|\widehat{E}_{a,a'}| \leq 9$, we only need to consider the case that 
$\widehat{G}_{a,a'}$ is isomorphic to $K_{3,3}$.
Since the three vertices $b_1$, $b'_2$ and $c'_3$ (big black dots in the figure)
are adjacent to $b_2$ in $\widehat{G}_{a,a'}$, 
they're also adjacent to $b'_1$ and $c_3$ (big white dots) as shown in Figure \ref{fig41} (b).
Restore the graph $G$ by adding back the two vertices $a$ and $a'$ and their associated nine edges 
as shown in Figure \ref{fig41} (c).
The reader can easily check that the graph $\widehat{G}_{a,b_1}$ is planar.

\subsection{$[B] = [0,4,2]$}   \hspace{1cm}

First, we give a very useful lemma.
Let $\widetilde{K}_{3,3}$ be the bipartite graph shown in Figure \ref{fig42}.
The six degree 3 vertices in $\widetilde{K}_{3,3}$ are divided into three big black vertices
and three big white vertices.
If we ignore the three degree 2 vertices, then we get $K_{3,3}$.
The vertex $d_4$ is called the {\em $s$-vertex\/}.

\begin{figure}[h]
\includegraphics[scale=1]{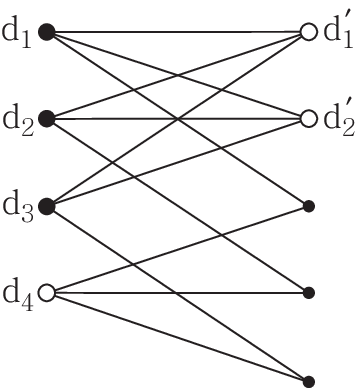}
\caption{$\widetilde{K}_{3,3}$}
\label{fig42}
\end{figure}

\begin{lemma}\label{lem:H}
Let $H$ be a bipartite graph such that one partition of its vertices contains four degree 3 vertices, 
and the other partition contains two degree 3 vertices and three degree 2 vertices.
If $H$ is not planar then $H$ is isomorphic to $\widetilde{K}_{3,3}$.
\end{lemma}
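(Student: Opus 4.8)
The plan is to reduce $H$ to a cubic graph by suppressing its degree $2$ vertices and then invoke Kuratowski's theorem. First I would record the basic count: the four degree $3$ vertices on one side contribute $4\cdot 3 = 12$ edges, matching $2\cdot 3 + 3\cdot 2 = 12$ on the other, so $H$ has nine vertices and twelve edges. Since $H$ is bipartite and all three degree $2$ vertices lie in the part with mixed degrees, these three vertices are pairwise non-adjacent, and each has both of its neighbors among the six degree $3$ vertices. Hence I can smooth each degree $2$ vertex independently, replacing the length-two path through it by a single edge, to obtain a graph $H'$ on the six degree $3$ vertices. Smoothing preserves the degree of every retained vertex, so $H'$ is $3$-regular with nine edges, and $H$ is a subdivision of $H'$; in particular $H$ and $H'$ are homeomorphic, so $H$ is planar if and only if $H'$ is.

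Next I would classify $H'$. It is a $3$-regular (a priori multi-)graph on six vertices. If $H'$ carried a repeated edge, then deleting the surplus copies would leave a simple graph with a vertex of degree less than $3$, which on six vertices cannot contain a subdivision of $K_{3,3}$; nor can it contain one of $K_5$, since $K_5$ needs five vertices of degree $4$. By Kuratowski's theorem such an $H'$ would be planar. Thus, assuming $H$ is non-planar, $H'$ is simple, and being a non-planar cubic graph on six vertices it must be $K_{3,3}$ (the only other such graph being the planar triangular prism). Consequently $H$ is exactly $K_{3,3}$ with three of its edges each subdivided once, the subdivision points being the three degree $2$ vertices.

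Finally I would use the prescribed bipartition to show the subdivided edges are forced, so that $H$ is determined up to isomorphism. Write $X$ for the part with the four degree $3$ vertices and $Y$ for the other part. Each degree $2$ vertex lies in $Y$, so both endpoints of every subdivided edge lie in $X$. Moreover the two degree $3$ vertices of $Y$ are non-adjacent in $H$, hence non-adjacent in $K_{3,3}$, and so lie in a common part $\{q_1,q_2,q_3\}$ of $K_{3,3}$. Then these two belong to $Y$, while the remaining vertex, say $q_1$, together with $\{p_1,p_2,p_3\}$ forms $X$. The edges of $K_{3,3}$ with both ends in $X$ are precisely $p_1q_1,\,p_2q_1,\,p_3q_1$, the three edges incident to $q_1$, and these are exactly the edges that must be subdivided. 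This ``subdivided star'' is $\widetilde{K}_{3,3}$, and the construction leaves no freedom, so $H \cong \widetilde{K}_{3,3}$.

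The routine bookkeeping with degree counts and smoothing is straightforward; I expect the only real care to be needed in the middle step, namely ruling out a non-planar multigraph $H'$ and pinning down the cubic graphs on six vertices, together with the closing verification that the prescribed bipartition admits a single subdivision pattern rather than several.
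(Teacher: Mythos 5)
Your proof is correct and follows essentially the same route as the paper: eliminate the three degree~2 vertices (your smoothing is the paper's edge contraction), identify the resulting 3-regular 6-vertex graph as $K_{3,3}$ via non-planarity, and then use the bipartition of $H$ to force the three subdivided edges to be precisely the star at one vertex, yielding $\widetilde{K}_{3,3}$. The only difference is one of care rather than substance: you explicitly rule out parallel edges arising from the smoothing, a point the paper's proof glosses over when it asserts that a non-planar nine-edge graph must be $K_{3,3}$.
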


\begin{proof}
Let $d_1$, $d_2$, $d_3$ and $d_4$ be the  four degree 3 vertices in one partition of $H$,
and $d'_1$ and $d'_2$ be the two degree 3 vertices in the other partition, which contains degree 2 vertices.
Let $\widehat{H}$ be the graph obtained from $H$ by contracting three edges,
one each from the pair adjacent to the three degree 2 vertices.
Since $\widehat{H}$ consists of nine edges but is not planar,
it must be $K_{3,3}$.
Therefore $d'_1$, $d'_2$ and one of the $d_i$'s, say $d_4$,
are in the same partition of $K_{3,3}$ since ${\rm dist}(d'_1,d'_2) \geq 2$.
Since $H$ is originally a bipartite graph, 
$d_4$ is connected to $d_i$ for each $i=1,2,3$, by exactly two edges adjacent to each degree 2 vertex of $H$.
This gives the graph $\widetilde{K}_{3,3}$ shown in Figure \ref{fig42}.
\end{proof}

Let $b'_1$, $b'_2$, $b'_3$, $b'_4$, $c'_1$ and $c'_2$
be the four degree 4 vertices and the two degree 3 vertices in $B$.
First consider the case that $V_3(a)$ has only one vertex, say $c'_1$.
Note that ${\rm dist}(b_i,c'_1) = 1$ for each $i=1,2$, otherwise $|\widehat{E}_{a,b_i}| \leq 8$.
Now we divide into two cases depending on whether $b_1$ is adjacent to three vertices 
among the $b'_j$'s (say $b'_1$, $b'_2$ and $b'_3$) or two (say $b'_1$ and $b'_2$) along with $c'_2$.
In Figure \ref{fig43}, the ten non-dashed edges in figures (a) and (b) indicate the first case
while the ten non-dashed edges in figures (c)$\sim$(e) indicate the second.

Let $H$ be the bipartite graph $H$ obtained from $G$ by deleting these ten non-dashed edges.
Then $H$ has four degree 3 vertices from $A$, 
and two degree 3 vertices and three degree 2 vertices from $B$.
We only need to handle the case that $H$ is not planar
because if $H$ is planar, then $\widehat{G}_{a,b_1}$ is also planar.
By Lemma \ref{lem:H}, $H$ is isomorphic to $\widetilde{K}_{3,3}$.
In each case, the $s$-vertex is either at $b_2$ as in figures (a) and (c) or 
at one of $c_i$'s, say $c_1$, as in figures (b), (d) and (e). 
The big white vertex on the left identifies the $s$-vertex.
Indeed these five figures (a)$\sim$(e) represent all the possibilities up to symmetry.

\begin{figure}[h]
\includegraphics[scale=1]{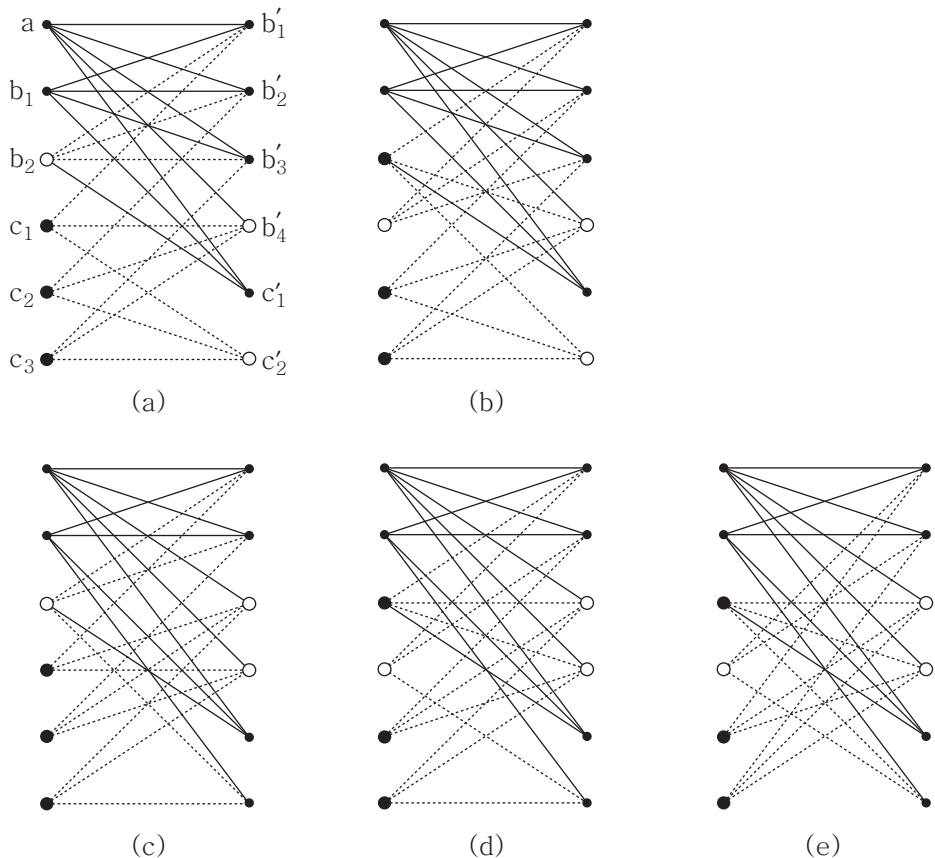}
\caption{$[A] = [1,2,3]$ and $[B] = [0,4,2]$ with $|V_3(a)|=1$}
\label{fig43}
\end{figure}

For the two graphs in the figures (a) and (c),
each $\widehat{G}_{b'_1,b'_2}$ has ten edges, but also contains two bi-gons and so is planar.
For the other three graphs in the figure, (b), (d) and (e),
each $\widehat{G}_{a,b_2}$ has nine edges, but also contains a bi-gon 
on the vertices $c_2$ and $c_3$. So, these are also planar.

Now consider the case where $V_3(a)$ has two vertices.
Thus $a$ is adjacent to three $b'_j$ vertices, say $b'_1$, $b'_2$ and $b'_3$,
as well as $c'_1$ and $c'_2$.
If $|V_3(b'_4)|=3$, then $NV_3(a,b'_4) = 5$, so $|\widehat{E}_{a,c}| \leq 8$.
We may assume that $b'_4$ is adjacent to $b_1$, $b_2$, $c_1$ and $c_2$ 
as shown in Figure \ref{fig44} (a).
Furthermore, $|V_3(b'_j)| \leq 2$ for each $j=1,2,3$, otherwise $|\widehat{E}_{a,b'_j}| \leq 9$ and 
$\widehat{G}_{a,b'_j}$ has the degree 4 vertex $b'_4$.
Thus each $b'_j$ is adjacent to either $b_1$ or $b_2$ or both.
Without loss of generality, we say that $b_1$ is adjacent to both $b'_1$ and $b'_2$.
Also $V_3(b_1)$ must have one vertex, say $c'_1$, and $c'_1$ is adjacent to $b_2$, 
otherwise $NV_3(a,b_1) + |V_4(a,b_1)| + |V_Y(a,b_1)| \geq 5$, so $|\widehat{E}_{a,b_1}| \leq 8$.
The four dashed edges in the figure indicate these new edges.

\begin{figure}[h]
\includegraphics[scale=1]{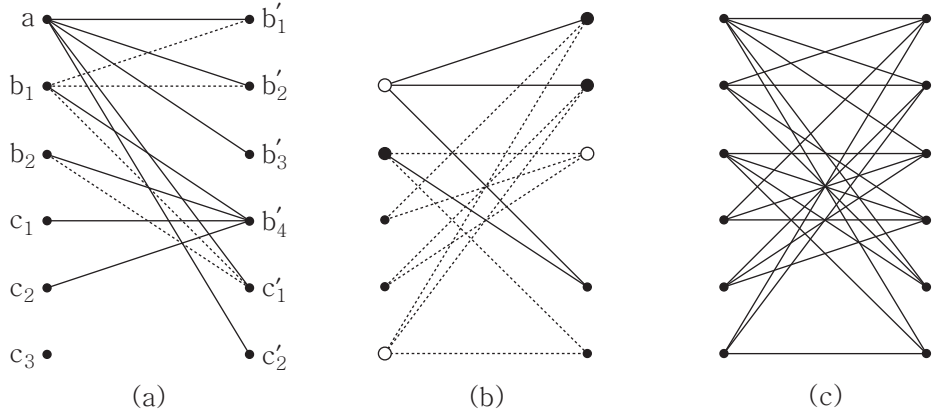}
\caption{$[A] = [1,2,3]$ and $[B] = [0,4,2]$ with $|V_3(a)|=2$}
\label{fig44}
\end{figure}

Now consider the graph $\widehat{G}_{a,b'_4}$.
Since $|\widehat{E}_{a,b'_4}| \leq 9$, 
we can assume that $\widehat{G}_{a,b'_4}$ is isomorphic to $K_{3,3}$.
Since the three vertices $b_2$, $b'_1$ and $b'_2$ (big black dots in the figure)
are adjacent to $b_1$ in $\widehat{G}_{a,b'_4}$, 
they're also adjacent to $c_3$ and $b'_3$ (big white dots)
as shown in Figure \ref{fig44} (b).
Restore the graph $G$ by adding back the two vertices $a$ and $b'_4$ and their associated nine edges
as shown in Figure \ref{fig44} (c).
The reader can easily check that the graph $\widehat{G}_{a,b_2}$ is planar.

\subsection{$[B] = [0,1,6]$}   \hspace{1cm}

Let $b'$ be the degree 4 vertex in $B$.
For given $i=1,2$, $NV_3(a,b_i) + |V_4(a,b_i)| \leq 4$, otherwise $|\widehat{E}_{a,b_i}| \leq 8$.
Therefore $V_3(a)$, $V_3(b_1)$ and $V_3(b_2)$ are the same set of four degree 3 vertices in $B$.
Since $|V_3(b')|=3$, then $|\widehat{E}_{a,b'}| = 7$.

\section{Case of $|A_5| = 0$}

In this case, $([A], [B])$ is one of $([0,4,2], [0,4,2])$, $([0,4,2], [0,1,6])$ or $([0,1,6], [0,1,6])$.
We divide into three subsections, one for each case.

\subsection{$([A], [B]) = ([0,4,2], [0,4,2])$}   \hspace{1cm}

We first give a very useful lemma.
Let $\widetilde{P}_{10}$ be the bipartite graph shown in Figure \ref{fig51}.
By deleting the vertex $c'$ and its two edges, we get $\widetilde{K}_{3,3}$.
The vertices $d_4$ and $c'$ are called the {\em $s$-} and {\em $t$-vertex\/}, respectively.

\begin{figure}[h]
\includegraphics[scale=1]{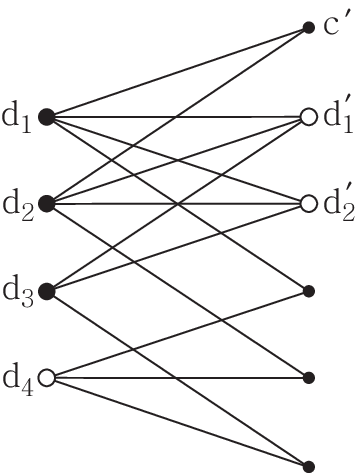}
\caption{$\widetilde{P}_{10}$}
\label{fig51}
\end{figure}

\begin{lemma}\label{lem:P}
Let $H$ be a bipartite graph such that one partition of its vertices contains two degree 4 vertices  
and two degree 3 vertices, and the other partition contains two degree 3 vertices and four degree 2 vertices.
If $H$ is not planar then $H$ is isomorphic to $\widetilde{P}_{10}$.
\end{lemma}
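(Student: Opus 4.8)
The plan is to mimic the proof of Lemma~\ref{lem:H}, reducing $H$ to a nine-edge graph by contracting edges at the degree~2 vertices and then analyzing what the resulting $K_{3,3}$ forces on the original structure. Let $d_1, d_2, d_3, d_4, d'_1, d'_2$ denote the vertices of degree $3$ or $4$, where $d_1, \dots, d_4$ lie in the partition with the two degree~4 vertices (say $d_1, d_2$ have degree $4$) and $d'_1, d'_2$ in the partition containing the four degree~2 vertices. First I would form $\widehat{H}$ by contracting one edge from each of the four degree~2 vertices, one by one. Counting edges: $H$ has $4+4+3+3 = 14$ incidences on one side but it is cleaner to count that the two degree~4 and two degree~3 vertices contribute $4+4+3+3=14$ edge-endpoints, matched by $3+3+2+2+2+2 = 14$ on the other side, giving $7$ edges \dots\ wait, that is not right: the total edge count is $\tfrac12(\text{sum of degrees})$. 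The first partition has degree sum $4+4+3+3 = 14$, so $H$ has $14$ edges, and after four contractions $\widehat{H}$ has $10$ edges. Since $H$ is nonplanar, $\widehat{H}$ is nonplanar, and a nonplanar bipartite graph on $10$ edges with the relevant degree sequence should be essentially $K_{3,3}$ plus one extra edge; I would argue that the only such graph is obtained from $K_{3,3}$ by adding a single vertex of the right degree, which is exactly the $t$-vertex structure of $\widetilde{P}_{10}$.

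More carefully, I would proceed in two stages. First, delete the degree~4 vertex $c'$ that will become the $t$-vertex together with its incident edges so as to land inside the hypothesis of Lemma~\ref{lem:H}; but since I do not yet know which vertex plays that role, the cleaner route is: contract the degree~2 vertices to get $\widehat{H}$ with $10$ edges, nonplanar. A nonplanar graph must contain a $K_{3,3}$ or $K_5$ minor, and since $H$ (hence $\widehat{H}$) is bipartite and triangle-free it contains no $K_5$ minor on these few vertices, so $\widehat{H}$ contains $K_{3,3}$. Because $\widehat{H}$ has only $10$ edges, it is $K_{3,3}$ plus exactly one extra edge attached to the two leftover vertices. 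I would then show that the two degree-4 vertices $d_1, d_2$ of $H$ cannot both sit in the same part of the embedded $K_{3,3}$ in a way that avoids creating a triangle or violating bipartiteness, and that one of $d_1, d_2$ must serve as the $t$-vertex $c'$ whose two edges are the ``extra'' structure sitting above a copy of $\widetilde{K}_{3,3}$.

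The key structural claim, and the one I would spend the most care on, is the following: after identifying the $K_{3,3}$ core inside $\widehat{H}$, reinstate the degree~2 vertices (reversing the contractions) and the extra edges, and check that the only bipartite reconstruction consistent with the prescribed degree sequence $([0,4,2],[0,4,2])$-restricted profile of $H$ is $\widetilde{P}_{10}$. Concretely, deleting the $t$-vertex $c'$ and its two edges should drop $H$ to a graph with one partition having four degree-3 vertices (the two former degree-4 vertices having lost one edge each) and the other partition having two degree-3 vertices and three degree-2 vertices; that is precisely the hypothesis of Lemma~\ref{lem:H}, so that residual graph is $\widetilde{K}_{3,3}$. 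This lets me invoke Lemma~\ref{lem:H} as a black box and merely verify that the two edges of $c'$ reattach in the unique way shown in Figure~\ref{fig51}, namely joining $c'$ to the $s$-vertex side appropriately so that $c'$ has degree~2 into the $\widetilde{K}_{3,3}$ and the two ex-degree-4 vertices recover degree~4.

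The main obstacle I anticipate is bookkeeping the case analysis for \emph{which} vertex becomes the $t$-vertex and ensuring the reattachment of $c'$ is forced rather than merely possible. In particular I must rule out configurations where the extra edge in $\widehat{H}$ connects two vertices already in the same part of $K_{3,3}$ (which would either create an odd cycle, contradicting bipartiteness, or fail to raise the right vertices to degree~4). I expect the triangle-free/bipartite constraint to eliminate all alternatives, leaving $\widetilde{P}_{10}$ as the unique nonplanar option, exactly parallel to how Lemma~\ref{lem:H} pinned down $\widetilde{K}_{3,3}$. The cleanest writeup reduces the whole argument to ``delete the correct degree-2-raising edge pair, apply Lemma~\ref{lem:H}, then observe the reattachment is unique,'' so most of the work is justifying that the $t$-vertex exists and is unique.
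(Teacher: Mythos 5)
Your overall strategy is the same as the paper's: contract one edge at each degree~2 vertex to get a six-vertex, ten-edge nonplanar graph $\widehat{H}$, recognize it as $K_{3,3}$ plus one edge $e$ joining two vertices in the same part, and then use bipartiteness of $H$ to recover the subdivision structure; your idea of finishing by deleting the $t$-vertex and citing Lemma~\ref{lem:H} is a reasonable repackaging of the paper's second half. However, two steps in your write-up are genuinely flawed or missing. First, your exclusion of $K_5$ rests on the claim that $\widehat{H}$ is bipartite. That is false: the contractions identify vertices from opposite parts, and indeed the intended answer $K_{3,3}+e$ contains triangles. The correct reason is numerical: $\widehat{H}$ has six vertices, ten edges and minimum degree three, so a $K_5$ subgraph would force an isolated sixth vertex, and a $K_5$ minor that is not a subgraph needs at least eleven edges. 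Second, the heart of the lemma is precisely the step you defer (``I expect the triangle-free/bipartite constraint to eliminate all alternatives''): one must show that $e$ is realized in $H$ by a path of length exactly two through a single degree~2 vertex $c'$ adjacent to both degree~4 vertices, and that the three $K_{3,3}$-edges at the same-part vertex $d_4$ are likewise single subdivisions. The paper gets this from a parity count: each of the four same-part edges of $K_{3,3}+e$ needs at least one interior degree~2 vertex since $H$ is bipartite, and only four such vertices exist, so each of these paths has length exactly two and all remaining edges of $K_{3,3}$ are edges of $H$. Without this count your appeal to Lemma~\ref{lem:H} is also unjustified, because that lemma requires $H\setminus\{c'\}$ to be nonplanar, which you never verify; it follows only once one knows $H\setminus\{c'\}$ is a subdivision of the $K_{3,3}$ obtained by deleting $e$.

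A further caution, which applies equally to the paper's own proof: both treat $\widehat{H}$ as a simple graph with ten edges, ignoring that the contractions can create \emph{parallel} edges when two degree~2 vertices of $H$ have the same pair of neighbors. This case is not vacuous. Consider the bipartite graph with parts $\{d_1,d_2,d_3,d_4\}$ and $\{d'_1,d'_2,u,v,p,q\}$ and edges $d_1\sim u,v,d'_1,d'_2$; $d_2\sim u,v,p,q$; $d_3\sim p,d'_1,d'_2$; $d_4\sim q,d'_1,d'_2$. It satisfies every hypothesis of Lemma~\ref{lem:P}, and it is nonplanar: suppressing $u$, $p$, $q$ exhibits a subdivision of $K_{3,3}$ with parts $\{d_2,d'_1,d'_2\}$ and $\{d_1,d_3,d_4\}$. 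Yet it is not isomorphic to $\widetilde{P}_{10}$, since its two degree~4 vertices have only two common neighbors, whereas in $\widetilde{P}_{10}$ they have three ($d'_1$, $d'_2$ and $c'$). So the statement you are trying to prove needs either an additional case in its conclusion or an extra hypothesis excluding two degree~2 vertices with identical neighborhoods; any complete proof (yours or the paper's) must confront the multigraph possibility rather than assert ``ten edges and nonplanar, hence $K_{3,3}+e$.''
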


\begin{proof}
Let $d_1$, $d_2$, $d_3$ and $d_4$ be the two degree 4 vertices and the two degree 3 vertices
in one partition of $H$,
and $d'_1$ and $d'_2$ be the two degree 3 vertices in the other partition, which contains degree 2 vertices.
Let $\widehat{H}$ be the graph obtained from $H$ by contracting four edges,
one each from the pair adjacent to each of the degree 2 vertices.
Since $\widehat{H}$ consists of six vertices and ten edges but is not planar,
it must be $K_{3,3}+e$, the graph obtained from $K_{3,3}$ by connecting two vertices in the same partition
by an edge $e$.
Then $e$ must connect the two degree 4 vertices $d_1$ and $d_2$.
Furthermore $d_1$ and $d_2$ and one of $d_3$ or $d_4$, say $d_3$, are 
in the same partition of $K_{3,3}+e$ containing the edge $e$.
Since $H$ was originally a bipartite graph, 
$d_1$ and $d_2$ are connected by two edges adjacent to a degree 2 vertex, call it $c'$, of $H$,
and $d_4$ is connected to $d_i$ for each $i=1,2,3$, by two edges adjacent to each other degree 2 vertex of $H$.
So, we get the graph $\widetilde{P}_{10}$ as shown in Figure \ref{fig51}.
\end{proof}

First consider the case that some degree 4 vertex, say $b_1$, in $A$ 
is adjacent to all four degree 4 vertices in $B$.
Let $b_2$ denote another degree 4 vertex in $A$,
and $H$ the graph obtained from $G$ by deleting the two vertices $b_1$ and $b_2$ and the adjacent eight edges.
If the graph $\widehat{G}_{b_1,b_2}$ is not planar, then $H$ satisfies all assumptions of Lemma \ref{lem:P}.
Thus $H$ is isomorphic to $\widetilde{P}_{10}$.
Now restore the vertex $b_2$ and the associated four dashed edges as shown in Figure \ref{fig52} (a).
Note that these four edges can be replaced in a unique way because of the assumptions for the vertex $b_1$.
Let $b_3$ denote a degree 4 vertex in $A$, other than $b_1$ and $b_2$.
The reader can easily check that the graph $\widehat{G}_{b_1,b_3}$ is planar as shown in Figure \ref{fig52} (b).

\begin{figure}[h]
\includegraphics[scale=1]{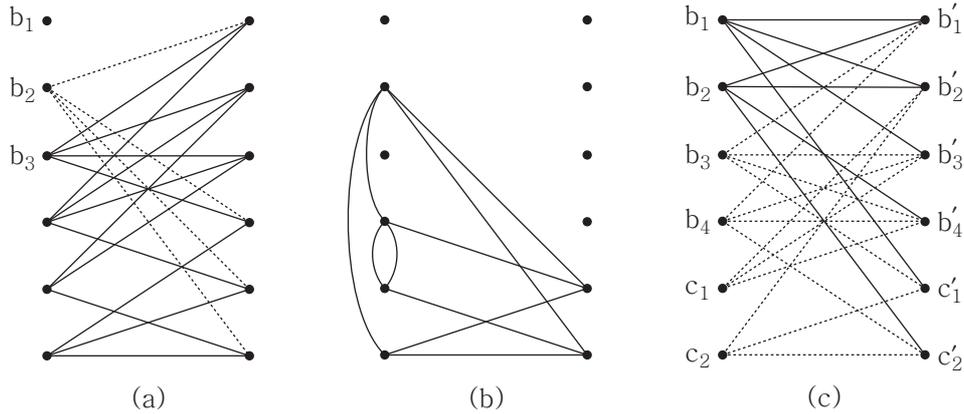}
\caption{$[A]=[B] = [0,4,2]$ with $|V_3(b_1)|=0$ or 1}
\label{fig52}
\end{figure}

Now assume that each degree 4 vertex, say $b_i$ for $i=1,2,3,4$, in $A$ is adjacent to 
at least one of the two degree 3 vertices, say $c'_1$ and $c'_2$, in $B$.
By counting the degrees of each vertex, we may assume that 
$b_1$ is adjacent to $c'_1$, but not $c'_2$.
Also assume that $b_2$ is adjacent to $c'_2$, but not $c'_1$
since at most three vertices among the $b_i$'s can be adjacent to $c'_1$.
If $V_4(b_1) = V_4(b_2)$, then $|\widehat{E}_{b_1,b_2}| \leq 9$.
Since $\widehat{G}_{b_1,b_2}$ has the remaining degree 4 vertex in $B$ outside of $V_4(b_1)$,
it is not isomorphic to $K_{3,3}$.
Therefore  $V_4(b_1) \cap V_4(b_2)$ has two vertices, say $b'_1$ and $b'_2$, in $B$.
As drawn in Figure \ref{fig52} (c), 
the eight non-dashed edges adjacent to the vertices $b_1$ and $b_2$ are settled.
Let $H$ denote the graph obtained from $G$ by deleting these two vertices and the associated eight edges.
If the graph $\widehat{G}_{b_1,b_2}$ is not planar, then $H$ satisfies all assumptions of Lemma \ref{lem:P}.
Thus $H$ is isomorphic to $\widetilde{P}_{10}$.
There are several choices for the $s$-vertex and the $t$-vertex among $A$ and $B$, respectively.
For example, we can choose $c_2$ for the $s$-vertex and $b'_1$ for the $t$-vertex
as in figure (c).
Whatever choice is made, the graph $\widehat{G}_{b_1,b_3}$ is always planar.

\subsection{$([A], [B]) = ([0,4,2], [0,1,6])$}   \hspace{1cm}

First assume that the degree 4 vertex, say $b'$, in $B$ is adjacent to
all four degree 4 vertices, say $b_1$, $b_2$, $b_3$ and $b_4$, in $A$.
Let $c'_1$ through $c'_6$ denote the six degree 3 vertices in $B$.
If $|V_3(b_j,b_k)| \geq 5$ for some different $j$ and $k$, then $|\widehat{E}_{b_j,b_k}| \leq 8$.
Thus we may assume that $|V_3(b_j,b_k)| \leq 4$ for all pairs $j$ and $k$,
so $V_3(b_j)$ and $V_3(b_k)$ have at least two common vertices among the $c'_i$'s.
Assume that $b_1$ is adjacent to $c'_1$, $c'_2$ and $c'_3$.
Note that each $c'_i$ is adjacent to at least one of the $b_j$'s because $c'_i$ has degree 3.
We also assume that $c'_4$ is adjacent to $b_2$.
Since $|V_3(b_1,b_2)| \leq 4$, assume that $b_2$ is also adjacent to $c'_1$ and $c'_2$.
Again, assume that  $c'_5$ is adjacent to $b_3$.
Since $|V_3(b_j,b_3)| \leq 4$ for $j=1,2$, $b_3$ must be adjacent to $c'_1$ and $c'_2$.
Now $c'_6$ is adjacent to $b_4$, and similarly $b_4$ must be adjacent to $c'_1$ and $c'_2$.
This is impossible because $c'_1$ and $c'_2$ have degree 3.
See Figure \ref{fig53} (a).

\begin{figure}[h]
\includegraphics[scale=1]{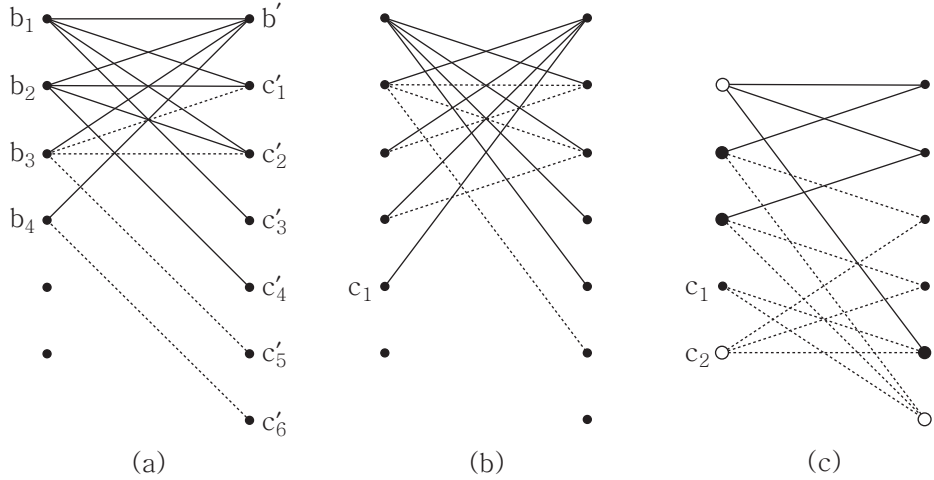}
\caption{$[A]= [0,4,2]$ and $[B] = [0,1,6]$}
\label{fig53}
\end{figure}

Now consider the case that $b'$ is not adjacent to a degree 4 vertex, say $b_1$, in $A$.
Assume that $b_1$ is adjacent to $c'_1$, $c'_2$, $c'_3$ and $c'_4$.
If $|V_3(b')| = 2$, then $|\widehat{E}_{b_1,b'}| \leq 8$.
Thus we may assume that $b'$ is adjacent to $b_2$, $b_3$, $b_4$ and a degree 3 vertex, say $c_1$.
We also assume that $c'_5$ is adjacent to $b_2$ because $c'_5$ has degree 3.
If $b_2$ is adjacent to $c'_6$ or $V_Y(b_1,b_2)$ is not empty, then $|\widehat{E}_{b_1,b_2}| \leq 8$.
Thus we may assume that $b_2$ is adjacent to $c'_1$ and $c'_2$,
and $c'_1$ and $c'_2$ are adjacent to $b_3$ and $b_4$, respectively. These are 
the non-dashed edges in Figure \ref{fig53} (b).

Now consider the graph $\widehat{G}_{b_1,b'}$.
Since $|\widehat{E}_{b_1,b'}| \leq 9$, we can assume that 
$\widehat{G}_{b_1,b'}$ is isomorphic to $K_{3,3}$.
Since the three vertices $b_3$, $b_4$ and $c'_5$ (big black dots in the figure)
are adjacent to $b_2$ in $\widehat{G}_{b_1,b'}$, 
they're also adjacent to $c_2$ and $c'_6$ (big white dots)
as shown in Figure \ref{fig53} (c).
Restore the graph $G$ by adding back the two vertices $b_1$ and $b'$ and their associated nine edges.
Then $NV_3(b_2,b_3) + |V_4(b_2,b_3)| = 6$, so $|\widehat{E}_{b_2,b_3}| \leq 8$.

\subsection{$([A], [B]) = ([0,1,6], [0,1,6])$}   \hspace{1cm}

If the degree 4 vertex, say $b$, in $A$ is not adjacent to the degree 4 vertex, say $b'$, in $B$,
then $|\widehat{E}_{b,b'}| \leq 6$ because $NV_3(b,b') = 8$.
Assume that $b$ is adjacent to $b'$, and let $e$ denote the edge connecting these two vertices.

\begin{lemma}\label{4cycle}
In this case, if $G$ is intrinsically knotted, then every 4-cycle contains the edge $e$.
\end{lemma}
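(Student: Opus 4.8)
The plan is to argue by contradiction, assuming $G$ is intrinsically knotted while some $4$-cycle $C$ avoids the edge $e$. First I would recast the statement combinatorially. In a bipartite graph a $4$-cycle uses two vertices from each part, and any $4$-cycle through $b$ uses both edges joining $b$ to its two $B$-vertices; since $e$ is the only edge from $b$ to $b'$, a $4$-cycle contains $e$ if and only if it contains both $b$ and $b'$. Thus ``every $4$-cycle contains $e$'' is equivalent to ``no $4$-cycle avoids $b$ and no $4$-cycle avoids $b'$.'' The hypotheses are symmetric under interchanging the parts $A$ and $B$ (this swaps $b$ and $b'$, fixes $e$, and preserves the degree sequence $[0,1,6]$), so it suffices to rule out a $4$-cycle $C=a_1\beta_1a_2\beta_2$ whose two $B$-vertices $\beta_1,\beta_2$ have degree $3$, that is, an $e$-avoiding cycle missing $b'$.

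Next I would record a baseline consequence of intrinsic knottedness and then reduce $C$ to a convenient form. Because $b$ and $b'$ are the unique degree-$4$ vertices and are adjacent, $NE(b,b')=7$ and $|V_3(b)|=|V_3(b')|=3$, so the count equation gives $|\widehat{E}_{b,b'}|=9$; by Proposition~\ref{prop:planar} this forces $\widehat{G}_{b,b'}\cong K_{3,3}$, a rigid constraint I would keep in reserve. If some $e$-avoiding $4$-cycle passes through a degree-$4$ vertex $b'$, then deleting $b'$ together with the opposite $A$-vertex of that cycle gives $NE=7$ while the two common degree-$3$ $B$-neighbours contribute to $NV_3$ and their further neighbours to $V_Y$, forcing $|\widehat{E}|\le 9$, indeed $\le 8$ when the extra adjacencies are present. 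Otherwise every $e$-avoiding $4$-cycle has all four vertices of degree $3$; choosing such a $C$, I may assume one $A$-vertex, say $a_1$, is not adjacent to $b'$ (if both $a_1,a_2$ were, the path through $b'$ would furnish an $e$-avoiding $4$-cycle through $b'$, handled above). Deleting $b'$ and $a_1$, which then lie in opposite parts and share no neighbour, yields $V_3(b',a_1)=V_4(b',a_1)=V_Y(b',a_1)=\varnothing$, while $NE=7$ and $NV_3=|V_3(b')|+|V_3(a_1)|=6$, so $|\widehat{E}_{b',a_1}|=9$.

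Whenever the count drops to $8$ we are finished by Proposition~\ref{prop:planar}, so the real work is to show the $9$-edge graph $\widehat{G}_{b',a_1}$ is not isomorphic to $K_{3,3}$. Here I would track the contractions forced by $C$: deleting $a_1$ drops $\beta_1$ and $\beta_2$ to degree $2$, and contracting them joins $a_2$ to the respective third neighbours $\gamma_1,\gamma_2$ of $\beta_1,\beta_2$. If $\gamma_1=\gamma_2$ this produces a multiple edge, and if a contraction closes up a short cycle it produces a cycle of length less than $4$; either defect is incompatible with the simple graph $K_{3,3}$, so $\widehat{G}_{b',a_1}\not\cong K_{3,3}$ and $G$ is not intrinsically knotted, the desired contradiction. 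The stubborn configuration is when $\gamma_1\neq\gamma_2$ and the contractions leave a $3$-regular graph exhibiting no immediate defect; I would resolve this using the reserved constraint, arguing that an all-degree-$3$ $4$-cycle disjoint from $\{b,b'\}$ either has a vertex that is itself contracted in forming $\widehat{G}_{b,b'}$ (so that $C$ collapses to a multiple edge or a triangle there, contradicting $\widehat{G}_{b,b'}\cong K_{3,3}$) or survives entirely in the core $K_{3,3}$, where its presence forces an additional contraction that again violates the $3$-regularity of $K_{3,3}$.

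The main obstacle, then, is precisely this borderline case where the count equals $9$: one cannot read off non-knottedness from the edge count alone and must certify that $\widehat{G}$ fails to be $K_{3,3}$. The difficulty is bookkeeping, namely controlling where the contracted edges land across the adjacency sub-cases (whether the chosen cycle vertex is adjacent to $b'$, and whether $\gamma_1=\gamma_2$) without an explosion of configurations. The identity $\widehat{G}_{b,b'}\cong K_{3,3}$ is the lever that keeps this analysis finite, and organizing the whole argument around that single rigid constraint is what I expect to make the proof clean.
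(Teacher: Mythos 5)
Your reduction, your counts, and your diagnosis of the difficulty are all sound: after the $A$--$B$ symmetry one only needs to rule out an $e$-avoiding $4$-cycle missing one of the degree-$4$ vertices, every deletion you consider gives $|\widehat{E}|\leq 9$, and the whole problem is indeed certifying that the resulting $9$-edge graph is not $K_{3,3}$. The gap is in how you attempt that certification. In both of your cases the pair you delete includes vertices of the cycle $C$ itself: in case (i) the degree-$4$ vertex and the cycle vertex opposite it, in case (ii) the cycle vertex $a_1$. Deleting cycle vertices destroys $C$ --- its remaining degree-$3$ vertices drop to degree $1$ or $2$ and are absorbed by the cleanup --- so $C$ leaves no trace that could obstruct $K_{3,3}$. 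In case (i) your claim that the count drops to $8$ only covers special coincidences; generically it is $9$ and nothing you have said prevents $\widehat{G}$ from being exactly $K_{3,3}$. The same is true in your ``stubborn configuration'' $\gamma_1\neq\gamma_2$ of case (ii).

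Your fallback via $\widehat{G}_{b,b'}\cong K_{3,3}$ does not close this. The first horn of your dichotomy is fine: if some vertex of $C$ is adjacent to $b$ or $b'$, that vertex is contracted in forming $\widehat{G}_{b,b'}$, so $C$ collapses to a triangle or bigon there, contradicting $\widehat{G}_{b,b'}\cong K_{3,3}$. But the second horn is false: a $4$-cycle that survives intact in $\widehat{G}_{b,b'}$ forces nothing, because $K_{3,3}$ has girth $4$ and contains many $4$-cycles; no ``additional contraction violating $3$-regularity'' occurs. So the case of an all-degree-$3$ cycle disjoint from the neighborhoods of $b$ and $b'$ is genuinely unresolved. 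The missing idea --- the one the paper uses --- is to delete two vertices lying in the same part and \emph{off} the cycle: with $H$ missing $b$ (WLOG), delete $b$ together with $c\in A$, $c\notin H$, chosen so that $b$ or $c$ is adjacent to a degree-$3$ vertex of $H\cap B$ (take $c$ to be that vertex's third neighbor, or any $c\notin H$ when that third neighbor is $b$). The count equation still gives $|\widehat{E}_{b,c}|\leq 9$ for any such $c$, but now the cycle is not destroyed: a $B$-vertex of $H$ drops to degree $2$ and is contracted, so $H$ survives in $\widehat{G}_{b,c}$ as a triangle or bigon, which certifies $\widehat{G}_{b,c}\not\cong K_{3,3}$ and finishes the proof via Proposition~\ref{prop:planar}.
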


\begin{proof}
Suppose that there is a 4-cycle $H$ which does not contain $e$. 
Then, we may assume that $H$ does not contain $b$.
Let $c$ be any vertex in $A$ such that 
either $b$ or $c$ is adjacent to some vertex of $H$ in $B$, other than $b'$.
Since $|V_3(b,c)| = |V_Y(b,c)|$ and $|V_3(c)| + |V_4(b,c)| = 3$,
$NV_3(b,c) + |V_4(b,c)| + |V_Y(b,c)| = (|V_3(b)| + |V_3(c)| - |V_3(b,c)|) + |V_4(b,c)| + |V_3(b,c)| = 6$.
Therefore $|\widehat{E}_{b,c}| \leq 9$.
Furthermore $\widehat{G}_{b,c}$ contains $H$ which is no longer a  4-cycle
because at least one vertex of $H$ in $B$ has degree 2.
So $\widehat{G}_{b,c}$ is not isomorphic to $K_{3,3}$.
\end{proof}

Now consider the subgraph $G \setminus \{e\}$ 
which consists of fourteen degree 3 vertices and has no 4-cycle by Lemma \ref{4cycle}.
We name these vertices $c_i$'s and $c'_j$'s as in Figure \ref{fig54} (a).
Assume that $c_1$ is adjacent to $c'_1$, $c'_2$ and $c'_3$, and
$c'_1$ is adjacent to $c_2$ and $c_3$.
Since there is no 4-cycle, we can also assume that
$c_2$ is adjacent to $c'_4$ and $c'_5$, $c_3$ with $c'_6$ and $c'_7$,
$c'_2$ with $c_4$ and $c_5$, and $c'_3$ with $c_6$ and $c_7$ as illustrated by the non-dashed edges in the figure.
Without loss of generality, we may assume that $c_4$ is adjacent to $c'_4$ and $c'_6$, 
and then $c_5$ must be adjacent to $c'_5$ and $c'_7$.
Similarly we may assume that $c_6$ is adjacent to $c'_4$, 
and then $c_6$ must be adjacent to $c'_7$, and $c_7$ to $c'_5$ and $c'_6$.
Finally we get the Heawood graph $C_{14}$ as drawn in Figure \ref{fig54} (b).
Note that $C_{14}$ is symmetric and any pair of vertices $c_i$ and $c'_j$ has  distance either 1 or 3.
Thus $G$ can be obtained by connecting two such vertices of distance 3 by the edge $e$.
This graph is isomorphic to Cousin 89 of the $E_9+e$ family as drawn in Figure \ref{fig13}.

\begin{figure}[h]
\includegraphics[scale=1]{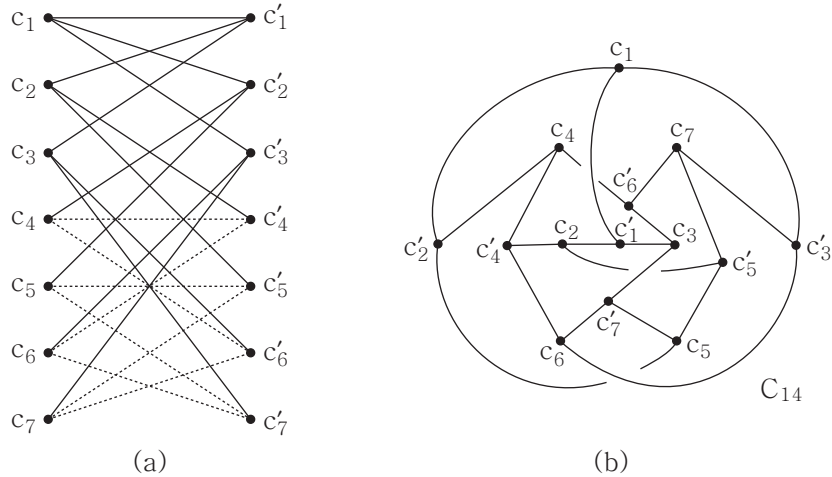}
\caption{The subgraph $G \setminus \{e\}$}
\label{fig54}
\end{figure}

\end{document}